\documentclass{amsart}
\usepackage{amsmath}
\usepackage{amssymb} 
\usepackage{graphicx} 

\newtheorem{theorem}{Theorem}[section]

\newtheorem{lemma}{Lemma}[section]

\numberwithin{equation}{section}

  \def\b#1{\mathbf{#1}} 
\def\a#1{\begin{align*}#1\end{align*}} \def\an#1{\begin{align}#1\end{align}} \def\t#1{\hbox{#1}}


\begin{document} 

\title[finite elements]{ The nodal basis of $C^m$-$P_{k}^{(3)}$ and $C^m$-$P_{k}^{(4)}$
    finite elements on tetrahedral and 4D simplicial grids
}

 \author {Shangyou Zhang}
\address{Department of Mathematical Sciences, University of Delaware,
    Newark, DE 19716, USA. }
\email{szhang@udel.edu}

\begin{abstract}
We construct the nodal basis of $C^m$-$P_{k}^{(3)}$ ($k \ge 2^3m+1$)
    and $C^m$-$P_{k}^{(4)}$ ($k \ge 2^4m+1$)
    finite elements on 3D tetrahedral and 4D simplicial grids, respectively.
$C^m$-$P_{k}^{(n)}$ stands for the space of globally $C^m$ ($m\ge1$) 
   and locally piecewise $n$-dimensional polynomials of
   degree $k$ on $n$-dimensional simplicial grids. 
We prove the uni-solvency and the $C^m$ continuity of the 
   constructed $C^m$-$P_{k}^{(3)}$ and $C^m$-$P_{k}^{(4)}$ finite element spaces.
A computer code is provided which generates the index set for the nodal basis 
   of $C^m$-$P_k^{(n)}$ finite elements on $n$-dimensional simplicial grids. 

\end{abstract}

\subjclass{65N15, 65N30}

\keywords{finite element, smooth finite element, simplicial grids, high order.}

\maketitle

 \baselineskip=14pt

\section{Introduction} 
The Argyris finite element is one of the first finite elements, cf. \cite{Ciarlet}.
It is a $C^1$-$P_5^{(2)}$ finite element on triangular grids.  
Here $C^1$-$P_5^{(2)}$ denotes the space of globally $C^1$ and locally piecewise polynomials of
   degree 5 on 2 dimensional triangular grids. 
In general  $C^m$-$P_{k}^{(n)}$ stands for the space of globally $C^m$ ($m\ge1$) and locally piecewise $n$-dimensional polynomials of
   degree $k$ on $n$-dimensional simplicial grids. 
It is straightforward to extend the Argyris finite element to $C^1$-$P_{k}^{(2)}$ ($k> 5$) finite elements, as follows.
We introduce $(k-5)$ function values at $(k-5)$ internal points on each edge; we 
    introduce (additional) first-order normal derivatives at
            $(k-4)$ internal points each edge;
  and we introduce additional
           function values at $\dim P_{k-6}^{(2)}$ internal points on the triangle.

In 1970, Bramble and Zl\'{a}mal \cite{Bramble}   and  \v{Z}en\'i\v{s}ek \cite{Zenisek}
  extended the above $C^1$-$P_{k}^{(2)}$ finite element to $C^m$-$P_{4m+1}^{(2)}$ finite elements
  for all $m\ge 1$.
In fact, \v{Z}en\'i\v{s}ek \cite{Zenisek} defined all $C^m$-$P_{k}^{(2)}$ finite elements for $k\ge 4m+1$.
We have a perfect partition of index in two space-dimensions.
That is,  \begin{itemize}
  \item we require $2m$th order continuity at each vertex of the triangulation, and the degrees of freedom 
   are exactly the function value, the two first derivatives, and up to the $2m+1$ $2m$th derivatives at
     each vertex; 
  \item we require $m$th order continuity on each edge of the triangulation, and the degrees of freedom 
   are exactly the function value at $(k-4m-1)$ internal points, the first normal derivatives at
     $(k-4m)$ internal points, and up to the $m$th normal derivatives at $(k-3m-1)$ internal points
     inside each edge; 
   \item the degrees of freedom are exactly the function value at $\dim P_{k-3m-3}^{(2)}$ internal points 
     inside each triangle.
\end{itemize} 
For $C^m$-$P_{4m+2}^{(2)}$ finite elements, the above sets of index form exactly seven triangles, three at three 
   vertices, three at three edges, and one at the center of the triangle.
Such a nice partition of index does not exists in three and higher dimensions.

The first $C^1$ element in 3D was constructed
     by  \v{Z}en\'i\v{s}ek in 1973 \cite{Zenisek-3d}, a $C^1$-$P_9^{(3)}$
   finite element.
By avoiding high-order derivatives in the degrees of freedom of \v{Z}en\'i\v{s}ek  \cite{Zenisek-3d},
   the author extended this finite element to all $C^1$-$P_k^{(3)}$  ($k\ge 9$) finite elements \cite{Zhang-3d}
   in 2009.
Also \v{Z}en\'i\v{s}ek  extended the $C^1$-$P_9^{(3)}$ finite element to $C^m$-$P_{8m+1}^{(3)}$ 
   in 1974 \cite{Zenisek-cm}.
Again high-order derivatives (above the continuity order) were used as degrees of freedom in \cite{Zenisek-cm}.
The author could not extend the 3D $C^m$ \v{Z}en\'i\v{s}ek finite element to all $C^m$-$P_{k}^{(3)}$
   ($k\ge 8m+1$) finite elements, but constructed a family of $C^2$-$P_{k}^{(3)}$
   ($k\ge 17$) finite elements using the continuity-equal order derivatives in \cite{Zhang-4d}, in 2016.
Also the author defined a family of $C^1$-$P_{k}^{(4)}$ ($k\ge 17$) finite elements  in \cite{Zhang-4d}.

For $C^m$-$P_{k}^{(n)}$ ($k\ge 2^n m+1$) finite elements on $n$ ($n\ge 3$) space-dimensional simplicial grids,
  by the author's limited knowledge,  Alfeld, Schumaker and Sirvent are the first to
   introduce the distance concept
     to low-dimensional simplex and to define recursively the index for the nodal basis 
   (degrees of freedom),  cf.  Equation (36) in \cite{Alfeld}.
As claimed by \cite{Alfeld}, it is very difficult to find explicit definitions of these index sets for general or 
   given  $m$, $n$ and $k$.

Similar to \cite{Alfeld}, \cite{Hu} and \cite{Chen} studied the index partition recently.
But no closed formula is obtained for the index sets for general $C^m$-$P_{k}^{(n)}$ finite elements.
For example,  \cite{Hu} obtained the index sets (degrees of freedom) for 
   the $C^4$-$P_{33}^{(3)}$ finite elements on tetrahedral grids.

Mathematically it is a challenge to find explicit definitions of basis functions for 
    general $C^m$-$P_{k}^{(n)}$ finite elements.
For so many years we could not even complete the work in 3D, except for $m=1$ (\cite{Zhang-3d})
   and $m=2$ (\cite{Zhang-4d}). 
In this work,  we give explicitly the index set 
    of the nodal basis for all 
   $C^m$-$P_{k}^{(3)}$ ($k\ge 2^3m+1$) and $C^m$-$P_{k}^{(4)}$ ($k\ge 2^4m+1$)  finite elements,
    on tetrahedral grids and four-dimensional simplicial grids, respectively.
We did not use any formula of \cite{Alfeld}, \cite{Hu}, or \cite{Chen}, in giving the
 closed formula on the index of  the  nodal bases of 
   $C^m$-$P_{k}^{(3)}$ and $C^m$-$P_{k}^{(4)}$ finite elements. 
We prove the uni-solvency and the $C^m$ continuity of the 
   constructed $C^m$-$P_{k}^{(3)}$ and $C^m$-$P_{k}^{(4)}$ finite element spaces.

We also provide a computer code which produces the index sets for any given $n$, $m$ and $k$.
The computer code is listed in Section \ref{code}.
The computer code follows the 
   continuity requirements and goes exhaustively and non-overlappingly from nodal indices on lower
   dimensional simplicial faces to those on higher ones. 
The computer code does not solve this mathematical problem, but does give explicit 
  indices for practical computation need.
With the computer output,  we study the patterns of overlapping index so that we
  can give explicit nodal basis definitions in 3D and 4D.
The computer code verifies the index sets, for some low $n$'s, $m$'s and $k$'s, 
   constructed in this manuscript.
In particular,  it verifies the indices of the $C^4$-$P_{33}^{(3)}$ 
   finite element in \cite{Hu}.

\section{The 3D $C^m$-$P_k^{(3)}$ finite elements}

Let $\mathcal{T}_h=\{K\}$ be a 3D tetrahedral grid where a tetrahedron 
   $K$ has 4 vertices $\{\b x_i=
   (x_1^{(i)}, x_2^{(i)}, x_3^{(i)}), \ i=0,\dots,3\}$, the intersection of
   of two $K$'s is either a common  face-triangle, or a
   common edge, or a common vertex, or an empty set, and the maximal diameter of $\{K\}$ is $h$.

\long\def\mskip#1{}

\mskip{ 
restart; n:=4: m:=1: k_1:=1: k:=m*2^n+1+k_1: dm:=(k+1)*(k+2)*(k+3)*(k+4)/24;
for i from 0 to n do m1[i]:=m*2^(n-1-i): od: m1[n]:=0:
for i from 0 to n do sc[i]:=vector(1+m*2^(n-1),0): 
  for j to n+1 do st[i,j]:=matrix(dm,n+1,0): od: od:
for i from 0 to 0 do for j from 0 to m1[i] do for i0 from k to 0 by -1 do 
 for i1 from k-i0 to 0 by -1 do               for i2 from k-i0-i1 to 0 by -1 do 
   for i3 from k-i0-i1-i2 to 0 by -1 do       for i4 from k-i0-i1-i2-i3 to 0 by -1 do  
    if i0=k-j then sc[i][j+1]:=sc[i][j+1]+1: s:=sc[i][j+1]:
      t[s,1]:=i0: t[s,2]:=i1: t[s,3]:=i2: t[s,4]:=i3: t[s,5]:=i4: fi:
    od:od:od:od:od: st[i,j]:=matrix(sc[i][j+1],n+1,0): 
    for i0 to sc[i][j+1] do for j0 to n+1 do st[i,j][i0,j0]:=t[i0,j0]: od: od:  od: od:

i:=0:  evalm(sc[i]); #for j from 0 to m1[i] do j,evalm(st[i,j]); od;

}

\mskip{ 
 m:=2: k1:=0:  k:=8*m+1+k1;  n:=3:  s1:=4*(4*m+1)*(4*m+2)*(4*m+3)/6;
   j:=0: for s from 0 to 2*m do j:=j+(s+k1)*(s+1); od; j;
s2:=6*( ((2*m + 1)^2+(2*m + 1))*k1/2 + ((2*m + 1)^3-(2*m+1))/3 );
s3:=4*((m + 1)*(3*k1^2 + 18*k1*m + 25*m^2 - 3*k1 - 4*m))/6;
s4:= (4*m+k1-2)*( 4*m+k1-1)*( 4*m+k1)/6- 4*(m-2)*(m-1)*m/6;
s1+s2+s3+s4; ss:=(8*m+k1+2 )* (8*m+k1+3 )* (8*m+k1+4 )/6;

restart;  s_vertex:=(4*m+1)*(4*m+2)*(4*m+3)/6:
s_edge:=sum((1+i)*(k1+i),i=0..2*m);
s_face:=((m + 1)*(3*k1^2 + 18*k1*m + 25*m^2 - 3*k1 - 4*m))/6:
s_tet:= simplify(sum((2*m - 1 + k1 + 2*k)*(2*m + k1 + 2*k)/2, k = 0 .. m) - (m-1)*m*(m+1)/2);

s1:=4*(4*m+1)*(4*m+2)*(4*m+3)/6; 
s2:=6*( ((2*m + 1)^2+(2*m + 1))*k1/2 + ((2*m + 1)^3-(2*m+1))/3 );
s3:=4*((m + 1)*(3*k1^2 + 18*k1*m + 25*m^2 - 3*k1 - 4*m))/6;
s4:= (4*m+k1-2)*( 4*m+k1-1)*( 4*m+k1)/6- 4*(m-2)*(m-1)*m/6;
expand(s1+s2+s3+s4-(8*m+k1+2 )* (8*m+k1+3 )* (8*m+k1+4 )/6); 

}
 
For 3D $C^m$-$P_k$ ($k\ge 2^3m+1$) finite elements,  we have 4 types of nodal basis functions.

\begin{description}
\item[\bf 1] At 4 vertices,  the function value, the three first derivatives and up to the $\dim P_{4m}^{(2)}$   
   $4m$th order derivatives, are selected.  There are 
 \an{\label{3-v}  \sum_{i=0}^{4m} \dim P_i^{(2)} = \dim P_{4m}^{(3)}  = \frac {  (4m+1)(4m+2)(4m+3)} 6
} degrees of freedom at each vertex.

\item[\bf 2] At 6 edges,  the function values at  $k_1$ internal points ($k_1=k-2^3 m-1$)  and the $2$ first derivatives at $k_1+1$ internal points, and up to $2m+1$  $ 2m$th
  order normal derivatives (normal
   to the edge) at $(k_1+2m)$ internal points,   are selected. On one edge there are 
 \an{\label{3-e} &\quad \ \sum_{i=0}^{2m} (1+i) \dim P_{k_1+i-1}^{(1)} \\
   \nonumber & = k_1 \frac { (2m+1)^2+(2m+1) }2 + \frac { (2m+1)^3- (2m+1) } 3
} degrees of freedom.

\item[\bf 3] On 4 face-triangles,  the function values at  $\dim P_{k-6m-3}^{(2)}$ internal points, and the first normal derivatives at
    $\dim P_{k-6m-1}^{(2)}$ internal points, and up to order $ m$ normal derivatives at 
     $\dim P_{k-4m-3}^{(2)}$ internal points, 
    except near each of 3 corners of a triangle $\dim P_{m-2}^{(3)}$ order $2$ to order $m$ normal derivatives,  are selected. 
  That is, we drop one order $2$ normal derivative at $\dim P_{0}^{(2)}$ internal corner point(s),
                         the order $3$ normal derivative at $\dim P_{1}^{(2)}$ internal corner points,
    and up to the order $m$  normal derivative at $\dim P_{m-2}^{(2)}$ internal corner points.
   On one face-triangle,  there are 
 \an{\label{3-f}   &\quad \ \sum_{i=0}^{m} \dim P_{k-6m-3+2i}^{(2)}-3 \dim P_{m-2}^{(3)}\\ \nonumber &= 
           \sum_{i=0}^{m} \frac{(2m-1+k_1 +2i)(2m+k_1+2i)} 2 -\frac{ (m-1)m(m+1)} 2 \\
  \nonumber &= (m + 1)\frac{ 3 k_1^2 + 3k_1 (6 m  -1)  + 25 m^2- 4 m}6
} degrees of freedom.

\item[\bf 4] Inside the tetrahedron,  the $\dim P_{k-4m-4}^{(3)}$ function values, except near
       each of four corners $\dim P_{m-3}^{(3)}$ internal function values, are selected.
    There are 
 \an{\label{3-tet} &\quad \ \dim P_{k-4m-4}^{(3)} -4 \dim P_{m-3}^{(3)}
   \\ \nonumber &=  \frac{( 4m+k_1-2 )( 4m+k_1-1)( 4m+k_1)} 6-\frac{4(m-2)(m-1)m}6
} degrees of freedom.

\end{description}

Adding all degrees of freedom in \eqref{3-v}--\eqref{3-tet},  we obtain
\a{
    \t{dof}_3 &= 4 \cdot  \frac {  (4m+1)(4m+2)(4m+3)} 6 \\
           &\quad \ + 6 \cdot (k_1 \frac { (2m+1)^2+(2m+1) }2 + \frac { (2m+1)^3-2m-1 } 3) \\
           &\quad \ + 4 \cdot  (m + 1)\frac{ 3 k_1^2 + 3k_1 (6 m  -1)  + 25 m^2- 4 m}6\\
           &\quad \ +   \frac{( 4m+k_1-2 )( 4m+k_1-1)( 4m+k_1)} 6\\
           &\qquad -\frac{4(m-2)(m-1)m}6\\
         &=  \frac{(8m+k_1+2 )( 8m+k_1+3)(8m+k_1+4)} 6. }
\an{\label{3-m}
 \dim  P_{k }^{(3)} &=  \frac{(8m+k_1+2 )( 8m+k_1+3)(8m+k_1+4)} 6=\t{dof}_3.\qquad\qquad  }
    The two numbers match.

We next limit ourselves to the unit right tetrahedron
\a{ K=\{ (x_1,x_2,x_3) : 0\le x_1, x_2, x_3, x_1+x_2+x_3 \le 1 \}.  }
For a general tetrahedron $K=(\lambda_1,\lambda_2,\lambda_3)$, the only change is 
   the coordinate system, using the barycentric coordinates instead of $(x_1,x_2,x_3)$.
The restriction of $K$ by $x_3=0$ is the 2D unit right triangle
\a{ T=\{ (x_1,x_2) : 0\le x_1, x_2,  x_1+x_2 \le 1 \}.  }

\begin{lemma}\label{2l}
  The subset of linear functionals in \eqref{3-v}--\eqref{3-f}, when restricted on
  the face-triangle $T$, uniquely determines a 2D polynomial $p_k\in P^{(2)}_k(T)$.
\end{lemma}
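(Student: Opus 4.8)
The plan is to discard the functionals that cannot ``see'' $p_k|_T$ and to recognize what remains as the nodal set of a two-dimensional $C^{2m}$-$P_k^{(2)}$ element on $T$, whose unisolvency is classical. A functional from \eqref{3-v}--\eqref{3-f} constrains $p_k|_T\in P_k^{(2)}(T)$ only if it is computable from $p_k|_T$ alone, that is, only if it carries no derivative $\partial_{x_3}$ in the direction normal to $T=\{x_3=0\}$. Accordingly I would keep only the functionals supported on the closed triangle $\bar T$ (its three vertices, three edges, and interior) and, among those, only their purely tangential parts.

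Next I would catalog these surviving functionals. At each of the three vertices of $T$ the tangential part of \eqref{3-v} is the set of derivatives $\partial_{x_1}^a\partial_{x_2}^b$ with $a+b\le 4m$, that is $\dim P_{4m}^{(2)}$ functionals per vertex. On each edge of $T$, writing $\nu$ for the unit normal to the edge lying in the plane of $T$, exactly one of the $(1+i)$ transverse $i$th order derivatives in \eqref{3-e} is tangential to $T$, namely $\partial_\nu^{\,i}$; this gives $\partial_\nu^{\,i}$ for $i=0,\dots,2m$ at the $\dim P_{k_1+i-1}^{(1)}=k_1+i$ interior points of the edge. In the interior of $T$ the tangential part of \eqref{3-f} is only the function value at the $\dim P_{k-6m-3}^{(2)}$ interior points, because every normal derivative retained or dropped in \eqref{3-f} carries a factor $\partial_{x_3}$ and thus does not survive the restriction.

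I would then observe that this is verbatim the nodal set of the two-dimensional $C^{2m}$-$P_k^{(2)}$ element of Bramble--Zl\'{a}mal and \v{Z}en\'i\v{s}ek described in the Introduction with $m$ replaced by $2m$: vertex derivatives up to order $2(2m)=4m$, the $i$th edge normal derivative at $k-4(2m)-1+i=k_1+i$ interior points for $i=0,\dots,2m$, and $\dim P_{k-3(2m)-3}^{(2)}=\dim P_{k-6m-3}^{(2)}$ interior values. Since the hypothesis $k\ge 2^3m+1=8m+1=4(2m)+1$ is precisely the admissibility bound $k\ge 4m'+1$ for the planar $C^{m'}$ element with $m'=2m$, its classical unisolvency applies and shows that the listed functionals determine $p_k\in P_k^{(2)}(T)$ uniquely. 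The identity $3\dim P_{4m}^{(2)}+3\sum_{i=0}^{2m}(k_1+i)+\dim P_{k-6m-3}^{(2)}=\dim P_k^{(2)}(T)$ serves as a dimension cross-check.

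The main obstacle I expect is the bookkeeping of the transverse and corner terms rather than any deep difficulty. One must verify carefully that exactly one of the $(1+i)$ transverse derivatives in \eqref{3-e} is tangential to $T$ and that its point count $\dim P_{k_1+i-1}^{(1)}$ matches the planar $C^{2m}$ prescription, and that the corner subtractions $-3\dim P_{m-2}^{(3)}$ in \eqref{3-f} remove only $\partial_{x_3}$-type normal derivatives, leaving the tangential interior count $\dim P_{k-6m-3}^{(2)}$ untouched. Once these identifications are secured, the lemma reduces cleanly to the known planar $C^{2m}$ unisolvency.
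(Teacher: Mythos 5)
Your proposal is correct and follows essentially the same route as the paper: the paper's proof likewise observes that the functionals surviving restriction to $T$ are exactly the nodal set of the planar $C^{2m}$-$P_k^{(2)}$ element described in the Introduction (vertex derivatives to order $4m$, edge normal derivatives $\partial_\nu^i$ at $k_1+i$ points for $i=0,\dots,2m$, interior values at $\dim P_{k-6m-3}^{(2)}$ points) and then cites the classical unisolvency of Bramble--Zl\'amal. Your write-up merely makes explicit the tangential-versus-$\partial_{x_3}$ bookkeeping and the dimension cross-check that the paper leaves implicit.
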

\begin{proof} The restricted sub-subset of linear functionals is also described in the Introduction of
  this manuscript, for the $C^{2m}$-$P_k^{(2)}$ (not $C^m$) 
  finite element.  The lemma is proved by \cite{Bramble}.
\end{proof}

\begin{theorem}\label{3u} The set of linear functionals in \eqref{3-v}--\eqref{3-tet}
    uniquely determines a 3D polynomial $p_k\in P^{(3)}_k(K)$.
\end{theorem}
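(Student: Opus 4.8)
The plan is to exploit the dimension identity \eqref{3-m}, which already gives $\mathrm{dof}_3=\dim P_k^{(3)}$, so that uni-solvency reduces to a single implication: if every functional in \eqref{3-v}--\eqref{3-tet} annihilates a polynomial $p_k\in P_k^{(3)}(K)$, then $p_k\equiv 0$. I would peel $p_k$ off the boundary, face by face and then layer by layer in the normal direction, using barycentric coordinates $\lambda_0,\dots,\lambda_3$ so that the face opposite the $i$-th vertex is $T_i=\{\lambda_i=0\}$ and $\partial_n$ denotes the derivative normal to a face.

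The first step is the trace. For each face $T_i$, the functionals in \eqref{3-v}--\eqref{3-f} supported on $\overline{T_i}$ restrict to exactly the degrees of freedom of the $2$D $C^{2m}$-$P_k^{(2)}$ element on $T_i$: the vertex derivatives up to order $4m$ become in-face derivatives up to order $4m=2\cdot 2m$, the edge normal derivatives up to order $2m$ become the within-face edge normal derivatives, and the $\dim P_{k-6m-3}^{(2)}$ face values are precisely the $C^{2m}$ interior nodes. By Lemma \ref{2l} these determine $p_k|_{T_i}$, so $p_k|_{T_i}=0$ on every face and hence $\lambda_0\lambda_1\lambda_2\lambda_3\mid p_k$.

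The second step is an induction on the normal order $j=0,1,\dots,m$, with the hypothesis that $\partial_n^{j'}p_k=0$ on all four faces for every $j'<j$. Writing $p_k=\lambda_i^{\,j}g$, the trace $g_j:=\partial_n^{\,j}p_k|_{T_i}$ equals, up to a nonzero constant, $g|_{T_i}$, a $2$D polynomial of degree $k-j$ whose natural in-face smoothness has dropped to $C^{2m-j}$. A degree count confirms that the order-$j$ normal functionals on the face, $\dim P_{k-6m-3+2j}^{(2)}$ of them away from the corners, together with the matching edge and vertex functionals, form a uni-solvent set for $g_j$ in the sense of Bramble--Zl\'amal, so $g_j=0$; running $j$ up to $m$ on all faces yields $(\lambda_0\lambda_1\lambda_2\lambda_3)^{m+1}\mid p_k$, consistent because $k\ge 2^3m+1\ge 4m+4$. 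Thus $p_k=(\lambda_0\lambda_1\lambda_2\lambda_3)^{m+1}r$ with $r\in P_{k-4m-4}^{(3)}$, and since the bubble factor is strictly positive in the open tetrahedron, the interior functionals of \eqref{3-tet} prescribe $r$ at a node set that is uni-solvent for $P_{k-4m-4}^{(3)}$, forcing $r=0$ and hence $p_k\equiv 0$.

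The hard part is the corner bookkeeping in the second step. Because the face functionals are thinned near the three corners of each $T_i$ (the $-3\dim P_{m-2}^{(3)}$ correction) and the interior functionals are thinned near the four vertices (the $-4\dim P_{m-3}^{(3)}$ correction), the reduced sets are \emph{a priori} too small to be uni-solvent on their own. The resolution is that the inductive hypothesis makes $p_k$ vanish to higher combined order along each edge (two faces) and at each vertex (three faces), so $g_j$ already vanishes to the required order near the corners and the dropped functionals carry no independent information. Matching, order by order, the exact count of suppressed corner functionals against the extra vanishing forced by the adjacent faces is the crux of the proof, and it is precisely this matching that the global identity \eqref{3-m} certifies is arithmetically consistent.
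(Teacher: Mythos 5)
Your overall route is the same as the paper's: reduce to uniqueness via the count \eqref{3-m}, kill the traces face by face with the Bramble--Zl\'amal result (Lemma \ref{2l}), peel normal derivatives of order $j=0,\dots,m$ to factor out the bubble $B=(\lambda_0\lambda_1\lambda_2\lambda_3)^{m+1}$, and then dispose of the cofactor $r=p_{k-4m-4}$ by the interior functionals. But your final step contains a genuine error. You assert that ``the interior functionals of \eqref{3-tet} prescribe $r$ at a node set that is uni-solvent for $P_{k-4m-4}^{(3)}$.'' They cannot be: by \eqref{3-tet} there are only $\dim P_{k-4m-4}^{(3)}-4\dim P_{m-3}^{(3)}$ of them, which is strictly short of $\dim P_{k-4m-4}^{(3)}$ as soon as $m\ge 3$. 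Your proposed repair --- that the inductive hypothesis makes $p_k$ vanish to high combined order at edges and vertices, ``so $r$ already vanishes to the required order near the corners and the dropped functionals carry no independent information'' --- is a non sequitur: all of the face-vanishing information has been absorbed into the factor $B$, and once you write $p_k=Br$ nothing in that hypothesis constrains $r$ at the vertices at all. A deficient point set plus no extra conditions cannot force $r=0$.

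What the paper actually does at this point is use the \emph{leftover} vertex functionals of \eqref{3-v}, which your argument never touches after the trace steps. Since the three faces through a vertex each divide $B$ to the power $m+1$, the bubble vanishes to order exactly $3(m+1)$ at each vertex; hence the hypothesis $\partial^{\alpha}p_k(\mathbf{v}_i)=0$ for all $|\alpha|\le 4m$ converts the vertex derivatives of $p_k$ of orders $3m+3,\dots,4m$ into the low-order conditions \eqref{v30}: $\partial^{\beta}p_{k-4m-4}(\mathbf{v}_i)=0$ for $|\beta|\le m-3$. That is exactly $\dim P_{m-3}^{(3)}$ conditions per vertex --- the local count $4m-3(m+1)=m-3$ is the real bookkeeping identity --- and these, combined with the thinned interior nodes, form the determining set for $P_{k-4m-4}^{(3)}$. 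Note also that your closing claim that the global identity \eqref{3-m} ``certifies'' the corner matching is not a proof device: a global dimension count can never certify that a \emph{local} exchange of dropped nodes for derivative conditions preserves uni-solvency; that must be (and in the paper implicitly is) checked corner by corner. The same caveat applies, more mildly, to your face-step induction for $j\ge 2$, where the dropped $3\dim P_{j-2}^{(2)}$ corner nodes of \eqref{3-f} are compensated by vertex derivatives of in-face order between $4m-2j+1$ and $4m-j$ rather than by any automatic vanishing of $g_j$ ``near the corners.''
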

\begin{proof} By \eqref{3-m},  we have a square linear system of finite equations so that 
  we only need to prove the uniqueness of the solution.   Let $p_k\in P^{(3)}_k(K)$ having
  all dof's in \eqref{3-v}--\eqref{3-tet} zero.  

By Lemma \ref{2l} and \eqref{3-v}--\eqref{3-f},
\a{ p_k = p_{k-1} x_3, \ \t{for some } \ p_{k-1}\in P^{(3)}_{k-1}(K). }
By \eqref{3-e}, $p_{k-1}$ vanishes at all three vertices of $T$:
\an{\label{v0} p_{k-1}(\b v_i) =0, \ \b v_i\in T, i=0,\dots,2.  }
Using the first normal derivative and all directional derivatives $\partial_{1,2,3}^{m_1,m_2,1}
   p_{k}$ ($= \partial_{1,2,3}^{m_1,m_2,0}
   p_{k-1}$ on $T$)
   in \eqref{3-v}--\eqref{3-f},  all degrees of freedom of $C^{2m-1}$-$P_{k-1}^{(2)}$
   of $p_{k-1}|_T$ vanish.  By Lemma \ref{2l}, we get $p_{k-1}|_T =0$ and 
\a{ p_{k-1} = p_{k-2} x_3, \ \t{for some } \ p_{k-2}\in P^{(3)}_{k-2}(K). }
Doing this $m-1$ times more, and on the other three face triangles,  we get
\an{\label{B} p_k = p_{k-4m-4} B, \ \t{for some } \ p_{k-4m-4}\in P^{(3)}_{k-4m-4}(K), }
where $B\in P_{4m+4}$ has its zeroth to $m$-th normal derivatives vanished on the
   4 face-triangles of $K$. 

By \eqref{3-v}--\eqref{3-f},
\an{\label{v30} \partial_{1,2,3}^{m_1,m_2,m_3} p_{k-4m-4}(\b v_i)
    =0, \ 0\le m_1+m_2+m_3\le (m-3), \ i=0,\dots,3, }
where $\{\b v_i\}$ are 4 vertices of $K$. We note that, in \eqref{v30},
    the high order derivatives on $p_k$ from \eqref{3-v}-\eqref{3-f} become low order
   derivatives on $p_{k-4m-4}$.
By \eqref{3-tet} and \eqref{v30},
\a{ p_{k-4m-4}=0. }
The proof is complete.
\end{proof}

\begin{theorem}\label{3c} The finite element space
\a{ V_h=\Big\{v \in L^2(\Omega) : v|_K=\sum_{i=1}^{\operatorname{dof}_3} f_i(v) \phi_i, \quad
   K\in \mathcal{T}_h \Big\} \subset C^m(\Omega), }
where linear functionals $\{f_i\}$ are defined in \eqref{3-v}--\eqref{3-tet},
  and $\{\phi_i\}$ is the dual basis of $\{f_i\}$ on $K$.
\end{theorem}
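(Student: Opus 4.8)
The plan is to reduce the global $C^m$ regularity of $V_h$ to a matching condition across each internal face-triangle, and then to reuse, localized to a single interface, the factorization argument already carried out in the proof of Theorem \ref{3u}. Since every $v\in V_h$ is a polynomial on each tetrahedron, the only thing to verify is the behaviour across the shared $2$-faces of $\mathcal{T}_h$. A function assembled from polynomial pieces is $C^m$ in a neighbourhood of an internal face-triangle $F$ if and only if, with $\nu$ a unit normal to $F$, the normal derivatives $\partial_\nu^{\,j}$ of the two pieces agree on $F$ for $j=0,1,\dots,m$; all tangential derivatives, and hence every derivative of order $\le m$, then agree automatically since they are determined by these $m+1$ traces. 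Because any edge or vertex of $\mathcal{T}_h$ lies in the closure of the face-triangles through it, establishing this matching across every internal $F$ yields $v\in C^m(\Omega)$.

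Fix an internal face-triangle $F$ shared by $K_1$ and $K_2$, put $p^{(1)}=v|_{K_1}$ and $p^{(2)}=v|_{K_2}$, and let $q=p^{(1)}-p^{(2)}$, regarded as a polynomial. By the definition of $V_h$, the functionals in \eqref{3-v}--\eqref{3-f} attached to the closed triangle $\overline F$ --- the vertex functionals at the three vertices of $F$, the edge functionals on its three edges, and the face functionals in its interior --- are common to $K_1$ and $K_2$, so $f_i(q)=0$ for every such functional. Choosing coordinates with $F\subset\{x_3=0\}$, I would then run the induction of Theorem \ref{3u} on $q$ in place of $p_k$. Lemma \ref{2l} first forces $q|_F=0$, whence $q=x_3\,q_1$; reinterpreting the first normal-derivative functionals together with the vertex and edge functionals as the degrees of freedom of the $C^{2m-1}$-$P_{k-1}^{(2)}$ element for $q_1|_F$ and applying Lemma \ref{2l} again gives $q_1|_F=0$, i.e. $q=x_3^2\,q_2$. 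Iterating, at the $j$-th step one works with the $C^{2m-j}$-$P_{k-j}^{(2)}$ two-dimensional element (admissible since $k-j\ge 8m+1-j\ge 8m-4j+1=4(2m-j)+1$ using $k\ge 8m+1$), and after $m+1$ steps $x_3^{m+1}$ has been peeled off. This shows $\partial_\nu^{\,j}q|_F=0$ for $j=0,1,\dots,m$, which is exactly the required $C^m$ matching across $F$.

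The main obstacle, precisely as in Theorem \ref{3u}, is to license each application of Lemma \ref{2l}: one must verify that after differentiating $j$ times in $x_3$ the surviving face functionals, together with the vertex functionals (of order up to $4m$) and the edge functionals (of order up to $2m$), still uniquely determine the trace $\partial_\nu^{\,j}q|_F\in P_{k-j}^{(2)}$, i.e. that they contain the degrees of freedom of a $C^{2m-j}$-$P_{k-j}^{(2)}$ element. Here the corner corrections in \eqref{3-f} --- the $\dim P_{m-2}^{(3)}$ high-order normal derivatives deleted near each corner of $F$ --- are exactly what keeps this $2$D trace problem neither over- nor under-determined: the data those deleted functionals would carry is already supplied by the vertex functionals through the Taylor expansion at the corresponding corner, so at each level the edge functionals match the edge budget of a $C^{2m-j}$ element exactly while the vertex budget is more than covered. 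Confirming that this overlap balances at every level $j=0,\dots,m$ is the delicate point; once it is in place, Lemma \ref{2l} applies at each stage and the chain of factorizations closes. I expect no further difficulty, and in particular the interior functionals \eqref{3-tet} of $K$ are never used, since the entire argument is confined to the two-dimensional trace on $F$.
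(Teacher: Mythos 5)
Your proposal is correct and follows essentially the same route as the paper: the paper's proof simply writes $v_1-v_2=\lambda_1^{m+1}p_{k-m-1}$ by invoking the factorization \eqref{B} already established in the proof of Theorem \ref{3u}, which is exactly the face-localized peeling you re-run on $q=p^{(1)}-p^{(2)}$, and it likewise dismisses edge/vertex continuity as a corollary of face continuity. The ``delicate point'' you flag --- that the vertex, edge and face functionals supply a full $C^{2m-j}$-$P_{k-j}^{(2)}$ trace set at each level $j$ --- is precisely what the proof of Theorem \ref{3u} (via Lemma \ref{2l}) settled, so the paper cites \eqref{B} rather than re-verifying it.
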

\begin{proof} Let $F$ be a common face-triangle of $K_1$ and $K_2$. Let $\lambda_1=0$ be
   a linear equation for the plane on which $F$ is. 
   By \eqref{B},   
\a{ v_1-v_2 = \lambda_1^{m+1} p_{k-m-1}, \ \t{for some } \ p_{k-m-1}\in P^{(3)}_{k-m-1}(\Omega), }
where $v_1$ and $v_2$ are global polynomials whose restrictions are
   $v|_{K_1}$ and $v|_{K_2}$, respectively.
Therefore the function value and all the normal derivatives up to $m$-th order, on $F$, are
   zero.  Thus $v$ is $C^m$ on face $F$.  The continuity on edges and at vertices are
   direct corollaries of face-continuity. 
The proof is complete.
\end{proof}

\section{The 4D $C^m$-$P_k^{(4)}$ finite elements}

Let $\mathcal{T}_h=\{S\}$ be a 4D simplicial grid where $S$ has 5 vertices $\{\b x_i=
   (x_1^{(i)}, x_2^{(i)}, x_3^{(i)},$ $ x_4^{(i)}), \ i=0,\dots,4\}$, the intersection of
   of two $S$'s is either a common face-tetrahedron, or a common face-triangle, or a
   common edge, or a common vertex, or an empty set, and the maximal diameter of $\{S\}$ is $h$.
\long\def\mskip#1{}
\mskip{ 
 for k from 1 to 5 do k,(k+1)*(k+2)*(k+3)*(k+4)/24; od;

 k_1:=0:  n:=4:  s4dof:=0:
m:=5: k:=2^n*m+1+k_1; d0:=m*8: s1:=(n+1)*(d0+1)*(d0+2)*(d0+3)*(d0+4)/24:
   j:=0: for s from 0 to 4*m do j:=j+(s+k_1)*(s+1)*(s+2)/2; od: s2:=j;
   j:=0: for i from 0 to 2*m do j:=j+(i+1)*(k-12*m-3+2*i+1)*(k-12*m-3+2*i+2)/2- 3* (i+1)*(i-2+1)*(i-2+2)/2; od:
    s3:=j;
   for i from 0 to m do t3[i]:=(k-8*m-4+3*i+1)*(k-8*m-4+3*i+2)*(k-8*m-4+3*i+3)/6
       -4*((2*m+2*i-2)*(2*m+2*i-1)*(2*m+2*i-0)/6) ; od: t4[0]:=0: t4[1]:=0:
   for i from 2 to m do t4[i]:=0: for j2 from 2 to i do t4[i]:=t4[i]+6*(i-j2+1)*(4*m+2-j2+k_1); od; od:
   for i from 0 to m do s3dof=i, t3[i]-t4[i]; od;
   s_tet:=0: for i from 0 to m do s_tet:=s_tet + t3[i]-t4[i]; od: 

    d0:=11*m-4+k_1: in1:=(d0+1)*(d0+2)*(d0+3)*(d0+4)/24:
    d0:=4*m-4: in2:=(d0+1)*(d0+2)*(d0+3)*(d0+4)/24:
               in3:=0: for i from 3 to m do in3:=in3+(m-i+1)*(m-i+2)/2*(4*m+k_1-i+3): od: 
   s4s:=in1-5*in2-10*in3:  
   s4dof:=s4dof,[s4s,k]; d0:=k: pks:=(d0+1)*(d0+2)*(d0+3)*(d0+4)/24:
   [pks,  s1+10*s2+10*s3+5*s_tet+s4s];

v0d:=[[325,17],[6965,33],[38325,49],[126021,65]]; s4dof;
v0d:=[[8505,34],[43771,50],[139225,66]];
v0d:=[[153375,67]];

restart;   d0:=m*8:  s_vertex:=(d0+1)*(d0+2)*(d0+3)*(d0+4)/24; 
s_edge1:=sum((1+i)*(i+2)/2*(k_1+i),i=0..4*m):
s_edge:=k_1*(4*m+1)*(4*m+2)*(4*m+3)/6+(4*m )*(4*m+1)*(4*m+2)*(4*m+3)/8; expand(s_edge1-s_edge);
s_face1:=simplify(subs(k=16*m+1+k_1, sum((i+1)*(k-12*m-3+2*i+1)*(k-12*m-3+2*i+2)/2- 3* (i+1)*(i-2+1)*(i-2+2)/2, i=0..2*m))):
s_face:=(m+1)*(2*m+1)*(3*k_1^2+40*k_1*m+118*m^2-3*k_1-7*m)/6; expand(s_face1-s_face);

s_tet:= simplify(sum((1+k_1+8*m-4+3*i+1)*(1+k_1+8*m-4+3*i+2)*(1+k_1+8*m-4+3*i+3)/6
           -4*((2*m+2*i-2)*(2*m+2*i-1)*(2*m+2*i-0)/6), i=0..m)
           -6*sum(sum( (i-j+1)*(4*m+2-j+k_1),j=2..i),i=2..m) );
s_tet1:=(m + 1)*( m* (2945*m^2-491*m+6)/24 + (546*m^2-105*m+4)*k_1/12  + (19*m-2)*k_1^2/4 + k_1^3/6);
    expand(s_tet1-s_tet);

simplify(s_tet-(1/24)*m*(m+1)*(2945*m^2-491*m+6)
              -(1/12*(m+1))*(546*m^2-105*m+4)*k_1
              -(1/4*(m+1))*(19*m-2)*k_1^2
              -(1/6*(m+1))*k_1^3);
factor((1/4)*m-(485/24)*m^2+(409/4)*m^3+(2945/24)*m^4);
factor((1/3)*k_1-(101/12)*m*k_1+(147/4)*k_1*m^2+(91/2)*k_1*m^3);
factor(     -(1/2)*k_1^2+(17/4)*k_1^2*m+(19/4)*k_1^2*m^2);
factor(   +(1/6)*k_1^3*m     +(1/6)*k_1^3);

d0:=11*m-4+k_1:  s4:=(d0+1)*(d0+2)*(d0+3)*(d0+4)/24 - 5*(4*m-3)*(4*m-2)*(4*m-1)*(4*m)/24:
factor(sum((m-i+1)*(m-i+2)/2*(4*m+k_1-i+3),i=3..m));

s4:=s4-10* ((1/24)*m*(m-1)*(m-2)*(4*k_1+15*m+3)):
pk:=(1+16*m+1+k_1)*(2+16*m+1+k_1)*(3+16*m+1+k_1)*(4+16*m+1+k_1)/24:
diffs:=factor(5*s_vertex+10*s_edge+10*s_face+5*s_tet+s4-pk); 

subs({m=4,k_1=1},[16*m+1+k_1, s_vertex, s_edge, s_face, s_tet, s4, pk,diffs]);

}
 
For 4D $C^m$-$P_k$ ($k\ge 2^4m+1$) finite elements,  we have 5 types of nodal basis functions.

\begin{description}
\item[\bf 1] At 5 vertices,  the function value and the derivatives up to order $ 2^3m$ are selected.  There are 
 \an{\label{4-v}  \operatorname{dof}_{4,0}=
           \dim P_{2^3m}^{(4)}  = \frac {  (8m+1)(8m+2)(8m+3)(8m+4)} {24}
} degrees of freedom at each vertex.

\item[\bf 2] At 10 edges,  the $k_1$ function values, ($k_1=k-2^4 m-1$) 
   and the $\dim P_{1}^{(2)}$ first derivatives at $k_1+1$ points, and up to 
  order $4m$ $\dim P_{4m}^{(2)}$ normal derivatives at $k_1+4m$ points,  are selected. 
  Inside one edge there are 
 \an{\label{4-e} &\quad \ \operatorname{dof}_{4,1}= \sum_{i=0}^{4m} (k_1+i) \dim P_{i}^{(2)} \\
   \nonumber & = k_1 \frac {m_1(m_1+1)(m_1+2)}6 + \frac {(m_1-1)m_1(m_1+1)(m_1+2) }8
} degrees of freedom,  where $m_1=4m+1$.

\item[\bf 3] On each of 10 face-triangles, with two normal vectors on each triangle, we are supposed to select 
     the function values at  $\dim P_{k-12m-3}^{(2)}$ points , and the $\dim P_{1}^{(1)} $
            first derivatives at $\dim P_{k-12m-1}^{(2)}$  points, and up to  order-$2m$ $\dim P_{2m}^{(1)}$
           normal derivatives at $\dim P_{k-8m-3}^{(2)}$ points. 
   However due to corner overlaps,  since the second derivative,
   we drop $\dim P_2^{(1)}$ third normal derivatives at $\dim P_0^{(2)}$ points at each of three corners of the triangle,
               $\dim P_3^{(1)}$ fourth normal derivatives 
     at $\dim P_1^{(2)}$ points at each of three corners of the triangle,
   and up to $\dim P_{2m}^{(1)}$ $2m$-th normal derivatives 
     at $\dim P_{2m-2}^{(2)}$ points at each of three corners of the triangle.
    On one face-triangle,  there are 
 \an{\label{4-f}   &\quad \ \operatorname{dof}_{4,2}=\sum_{i=0}^{2m} \Big(\dim P_{i}^{(1)}  \dim P_{k-12m-3+2i}^{(2)} -
                                 3\dim P_{i}^{(1)}   \dim P_{i-2}^{(2)} \Big)  \\ 
  \nonumber &= (m + 1)(2m + 1)\frac{ 3 k_1^2 + 40 k_1 m + 118 m^2 - 3 k_1 - 7 m }6
} degrees of freedom inside each triangle.

\item[\bf 4] On each of 5 face-tetrahedra of the 4D simplex,  we are supposed to select 
    the function value at  $\dim P_{k-8m-4}^{(3)}$ internal points, the first normal derivative at
    $\dim P_{k-8m-1}^{(3)}$  internal points, and up to the $m$-th normal derivative at
    $\dim P_{k-5m-3}^{(3)}$   internal points.  For the first derivative and higher normal derivatives,
   we have vertex overlapping and we drop near each of 4 corners of a tetrahedron,
     the $i$-th norm derivative ($0\le i\le m$) at $\dim P_{m+2i-2}^{(3)}$ internal points.
  The overlap with edge degrees of freedom is complicated.  The overlapped indices form 
   a  wedge with two end-triangles non-parallel.  
    That is,  along each edge,  we drop second normal derivatives at  $4m$ internal (tetrahedron) points,
   third normal derivatives at $2(4m)+(4m-1)$ points,  fourth normal derivatives at $3(4m)+2(4m-1)+(4m-2)$
   points, and so on until $m$-th normal derivatives.
    There are 
 \an{\label{4-t}  &\quad \ \operatorname{dof}_{4,3}=\sum_{i=0}^{m} \Big( \dim P_{k-8m-4+3i}^{(3)} 
               -      4  \dim P_{i-1}^{(3)} \Big)   \\ 
  \nonumber &\qquad - 6 \sum_{i=2}^{m}\sum_{j=2}^i (i-j+1)(4m+2-j)   \\ 
  \nonumber &= (m + 1)\Big( \frac{ m (2945m^2-491m+6)}{24}  \\ 
  \nonumber &\qquad + \frac{(546m^2-105m+4)k_1}{12}  + \frac{(19m-2)k_1^2} 4 + \frac{k_1^3}6 \Big)
} degrees of freedom inside each tetrahedron.

\item[\bf 5] Inside the 4D simplex,  the function values at $\dim P_{k-5m-5}^{(4)}$ internal points are supposedly 
   selected, but near each of five corners we 
 drop $\dim P_{4m-4}^{(4)}$ internal function values,
   and for $m\ge 3$ near each of ten edges we 
   drop the function value at internal points which 
   form 4-dimensional wedges. 
   The barycentric coordinates of these 4-dimensional wedge points, near
   the edge $\b x_4\b x_5$,
        have the form $(c_1,c_2,c_3,c_4,c_5)$ with $\max\{c_4,c_5\} \le 2^3m+k_1$, 
            $c_4+c_5=(2^2+ 2^3)m+k_1+(i-3)$ where $i=3,\dots, m$,
      and $c_1+c_2+c_3\le 4m-(i-3)$. 
   There are, inside the 4D simplex, 
 \an{\label{4-s}  \operatorname{dof}_{4,4}&=\dim P_{k-5m-5}^{(4)} -5 \dim P_{4m-4}^{(4)}
       \\
         \nonumber &\quad \ - 10\sum_{i=3}^m \dim_{m-i}^{(2)} (4m+k_1-i+3)  \\
      \nonumber &=  \frac{(11 m-3+k_1)(11 m-2+k_1)(11 m-1+k_1)(11 m+k_1)} {24} \\
         \nonumber &\quad \ -\frac{5(4m-3)(4m-2)(4m-1)(4m)} {24} \\
         \nonumber &\quad \                -\frac{10(m-2)(m-1)m (4k_1+15m+3)}{24} 
} degrees of freedom.

\end{description}
Adding all degrees of freedom in \eqref{4-v}--\eqref{4-s},  we obtain the total degrees of freedom on one
   4D simplex, 
\a{
    \t{dof}_4 &=5 \operatorname{dof}_{4,0} 
               +10 \operatorname{dof}_{4,1}
               +10 \operatorname{dof}_{4,2}
                +5 \operatorname{dof}_{4,3}+\operatorname{dof}_{4,4}  \\
         &=  \frac{(16m+k_1+2)(16m+k_1+3)(16m+k_1+4)(16m+k_1+5)}{24}  \\
         &=  \frac{(k+1)(k+2)(k+3)(k+4)}{24}. }
\an{\label{4-m} 
        \dim  P_{k }^{(4)} 
     &=\frac{(k+1)(k+2)(k+3)(k+4)}{24}=\t{dof}_4. \qquad\qquad\qquad\qquad
   }

\begin{theorem} The set of linear functionals in \eqref{4-v}--\eqref{4-s}
    uniquely determines a 4D polynomial $p_k\in P^{(4)}_k(S)$.
\end{theorem}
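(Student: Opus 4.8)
The plan is to repeat the argument of Theorem \ref{3u} one dimension higher, with Theorem \ref{3u} itself now playing the role that Lemma \ref{2l} played there. By \eqref{4-m} the number of functionals equals $\dim P_k^{(4)}(S)$, so the linear system is square and it suffices to prove uniqueness; thus I would assume $p_k\in P_k^{(4)}(S)$ annihilates every functional in \eqref{4-v}--\eqref{4-s} and deduce $p_k=0$. The first step is a restriction lemma in the spirit of Lemma \ref{2l}: when the functionals of \eqref{4-v}--\eqref{4-t} are restricted to a face-tetrahedron $K$ of $S$, they are exactly the degrees of freedom of a $C^{2m}$-$P_k^{(3)}$ element on $K$, because the vertex data reach order $8m=4\cdot 2m$, the edge data order $4m=2\cdot 2m$, and the triangle data order $2m=1\cdot 2m$, which is the $C^{2m}$ pattern in 3D. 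Since $k\ge 2^4m+1=2^3(2m)+1$, Theorem \ref{3u} applied with $m$ replaced by $2m$ shows these restricted functionals determine $p_k|_K$ uniquely.

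Next I would peel off the five facets just as in Theorem \ref{3u}. Fixing a facet $K$ with barycentric equation $\lambda=0$, the restriction lemma forces $p_k|_K=0$, hence $p_k=\lambda\,q$ for some $q\in P_{k-1}^{(4)}(S)$. Differentiating once normal to $K$ and combining the first-order normal data of \eqref{4-e}--\eqref{4-t} with the tangential derivatives already annihilated shows that $q|_K$ kills all degrees of freedom of a $C^{2m-1}$-$P_{k-1}^{(3)}$ element, so $q|_K=0$ by Theorem \ref{3u} again. Iterating through the $m$-th normal derivative, and repeating over all five facets, yields
\an{\label{4-B} p_k = p_{k-5m-5}\,B,\qquad B=\prod_{j=1}^{5}\lambda_j^{\,m+1}\in P_{5m+5},}
where $B$ and its normal derivatives through order $m$ vanish on every facet of $S$.

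It remains to prove $p_{k-5m-5}=0$ from the functionals not yet used: the high-order vertex data \eqref{4-v}, the surviving edge data \eqref{4-e}, and the interior data \eqref{4-s}. Here I would use that $B$ vanishes to order $4(m+1)$ at each vertex, where four facets meet, and to order $3(m+1)$ along each edge, where three facets meet. Differentiating \eqref{4-B} then turns the vertex functionals of order up to $8m$ into vertex functionals on $p_{k-5m-5}$ of order up to $8m-(4m+4)=4m-4$, and the edge functionals of order up to $4m$ into edge functionals on $p_{k-5m-5}$ of order up to $4m-(3m+3)=m-3$. These account precisely for the $\dim P_{4m-4}^{(4)}$ corner points and, for $m\ge 3$, the edge-wedge points removed from the interior index set in \eqref{4-s}, while the remaining interior points of \eqref{4-s} supply genuine function values of $p_{k-5m-5}$.

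I expect the main obstacle to be this final accounting, rather than the peeling, which is routine once the restriction lemma is in place. In the 3D case of Theorem \ref{3u} the product $B$ vanishes to order $2(m+1)$ along each edge, so the surviving edge order $2m-(2m+2)$ is negative and no edge data contribute; in 4D the surviving order $m-3$ is nonnegative for $m\ge 3$, which is exactly why the four-dimensional wedges appear in \eqref{4-s}. The delicate point is to show that the explicit barycentric description there---$\max\{c_4,c_5\}\le 2^3m+k_1$, $c_4+c_5=12m+k_1+(i-3)$ with $i=3,\dots,m$, and $c_1+c_2+c_3\le 4m-(i-3)$---captures exactly the monomials forced to vanish by the edge functionals acting through $B$, so that the vertex-derivative, edge-wedge, and interior-value functionals form a uni-solvent set for $p_{k-5m-5}\in P_{k-5m-5}^{(4)}(S)$. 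Then the count behind \eqref{4-m} upgrades to genuine uni-solvency and the proof closes.
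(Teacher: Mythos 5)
Your proposal is correct and follows essentially the same route as the paper's own proof: a square system via \eqref{4-m}, facet restriction resolved by Theorem \ref{3u} applied as a $C^{2m}$-$P_k^{(3)}$ element, iterated peeling of the five facets to reach the bubble factorization \eqref{B4}, and a final count pairing vertex derivatives, edge data of orders $0,\dots,m-3$ at $k-12m-m_0$ points (the paper's \eqref{4t1}--\eqref{4t4}), and the wedge-corrected interior values of \eqref{4-s}. The only divergences are cosmetic: your surviving vertex order $4m-4$ after dividing by $B$ is the count consistent with the $\dim P_{4m-4}^{(4)}$ corner drops in \eqref{4-s} (the paper's \eqref{4t1} literally states order $m-1$), and where you gloss the intermediate peeling steps (the paper patches the quotients' mismatched vertex and edge data with the extra conditions \eqref{4v0}--\eqref{4v2}) and flag the final accounting as delicate, the paper is comparably terse, so the two arguments match step for step.
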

\begin{proof} By \eqref{4-m},  we have a square linear system of finite equations. 
  The uniqueness implies the existence of solution.   
  Let $p_k\in P^{(4)}_k(S)$ having
  all dof's in \eqref{4-v}--\eqref{4-s} zero.  
  By Theorem \ref{3u} and \eqref{4-v}--\eqref{4-t}, as all dof's of $C^{2m}$-$P^{(3)}_k(K)$
   of $p_k|_K$ vanish,
\a{ p_k = p_{k-1} x_4, \ \t{for some } \ p_{k-1}\in P^{(4)}_{k-1}(S), }
where we assume \a{
   S=\{(x_1,x_2,x_3,x_4) : 0\le x_1,x_2,x_3,x_4,x_1+x_2+x_3+x_4\le 1 \}. }

By \eqref{4-v} and \eqref{4-f}, $p_{k-1}$ vanishes at all four vertices of a
  face-tetrahedron $K=\{x_4=0\}$ of $S$, because all first order derivatives of
   $p_k$ vanish at those points,
\an{\label{4v0} p_{k-1}(\b v_i) =0, \ \b v_i\in K, i=0,\dots,3.  }
Using the first normal derivative and all directional derivatives 
  $\partial_{1,2,3,4}^{m_1,m_2,m_3,1}
   p_{k}$ in \eqref{4-v}--\eqref{4-t}, by Theorem \ref{3u} and \eqref{4-t}, we have
\a{ p_{k-1} = p_{k-2} x_4, \ \t{for some } \ p_{k-2}\in P^{(4)}_{k-2}(S). }

By \eqref{4-v} and \eqref{4-f}, $p_{k-2}$ and all 4(3 on face tetrahedron) first order 
   derivatives vanish at all four vertices of a
  face-tetrahedron $K=\{x_4=0\}$ of $S$, 
\an{\label{4v1} \partial_{1,2,3,4}^{m_1,m_2,m_3,0}
     p_{k-2}(\b v_i) =0, \  0\le m_1+m_2+m_3\le 1, \ i=0,\dots,3.  }
Additionally, for $m\ge 2$,  by \eqref{4-v} and \eqref{4-f}, 
   $p_{k-2}$ vanishes at $4m$ internal points on the 6 edges of $K$, 
\an{\label{4v2}  
     p_{k-2}(\b m_{i,j}) =0, \  i=1,\dots,6, \ j=1,\dots,4m.  }
By \eqref{4v1}, \eqref{4v2} and \eqref{4-t}, $p_{k-2}|_K=0$ and
\a{ p_{k} = p_{k-3} x_4^3, \ \t{for some } \ p_{k-3}\in P^{(4)}_{k-3}(S). }

Repeating this $m-2$ times and also on the other 4 face-tetrahedra of $S$,  we get
\an{\label{B4} p_k = p_{k-5m-5} B, \ \t{for some } \ p_{k-5m-5}\in P^{(4)}_{k-5m-5}(S), }
where $B\in P_{5m+5}(S)$ is a bubble polynomial having
   its zeroth to $m$-th normal derivatives vanishing on the
   5 face-tetrahedra of $S$. 

By \eqref{4-v}--\eqref{4-t}, we have
\an{\label{4t1}  \partial_{1,2,3,4}^{m_1,m_2,m_3,m_4}
     p_{k-5m-5}(\b v_i) =0, \  0\le \sum_{i=1}^4 m_i\le m-1, \ i=0,\dots,4,  }
where $\{\b v_i\}$ are the 5 vertices of $S$.
For $m\ge 3$, by \eqref{4-v}--\eqref{4-t}, we have
\an{\label{4t2}  
     p_{k-5m-5}(\b m_{i,j}) =0, \  j=1,\dots,k-12m, \ i=1,\dots,10,  }
where $\{\b m_{i,j}\}$ are 1D Lagrange points inside the 10 edges of $S$.
Further, for $m\ge 4$,  by \eqref{4-v}--\eqref{4-t}, we have
\an{\label{4t3}  
    \partial_{\b n_{i,1},\b n_{i,2},\b n_{i,3}}^{m_1,m_2,m_3} 
    p_{k-5m-5}(\b m_{i,j}) =0, \  & m_1+m_2+m_3=1,    \\
           \nonumber & j=1,\dots,k-12m-1,\ i=1,\dots,10,  }
where $\{\b n_{i,j}\}$ are three unit normal vector on an edge $E_i$ of
    $S$, and  $\{\b m_{i,j}\}$ are 1D Lagrange points inside the 10 edges of $S$.
We note that each such first order derivative can replace a function value, in \eqref{4-s}, at a
  node internal to a face-triangle (of $S$) which has this edge as one of its three edges.
The Lagrange points in \eqref{4t2} and \eqref{4t3} are different as they belong to
   different degree polynomials.
Combining \eqref{4t2}, \eqref{4t3} and equations for higher order derivatives,  we have
\an{\label{4t4}  
    \partial_{\b n_{i,1},\b n_{i,2},\b n_{i,3}}^{m_1,m_2,m_3} 
    p_{k-5m-5}(\b m_{i,j}) =0, \  & m_0=\sum_{i=l}^3 m_l=0,\dots,m-3,    \\
           \nonumber & j=1,\dots,k-12m-m_0,\ i=1,\dots,10.  }
By \eqref{4t1}, \eqref{4t4}   and \eqref{4-s}, we conclude with $p_{k-5m-5}=0$.
The proof is complete.
\end{proof}

\begin{theorem} The finite element space
\a{ V_h=\Big\{v \in L^2(\Omega) : v|_S=\sum_{i=1}^{\operatorname{dof}_4} F_i(v) \Phi_i, \quad
   S\in \mathcal{T}_h \Big\} \subset C^m(\Omega), }
where linear functionals $\{F_i\}$ are defined in \eqref{4-v}--\eqref{4-s},
  and $\{\Phi_i\}$ is the dual basis of $\{F_i\}$ on $S$.
\end{theorem}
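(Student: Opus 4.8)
The plan is to mirror the proof of Theorem~\ref{3c}, now gluing two adjacent $4$D simplices across a common face-tetrahedron. Let $T = S_1 \cap S_2$ be such a face-tetrahedron and let $\lambda = 0$ be a linear equation for the hyperplane carrying $T$. First I would check that every functional in \eqref{4-v}--\eqref{4-t} attached to $T$ or to one of its sub-faces (triangles, edges, vertices) is a single global degree of freedom shared by $S_1$ and $S_2$: the function-value functionals sample $v$ only at points of $T$, while the normal-derivative functionals use the one direction normal to $\{\lambda=0\}$, fixed globally and common to both simplices. Hence $v|_{S_1}$ and $v|_{S_2}$ assign each such functional the same value.

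Writing $w = v_1 - v_2$ for the difference of the two polynomial extensions of $v|_{S_1}$ and $v|_{S_2}$, every $T$-attached dof of $w$ therefore vanishes, and I would run the one-face specialization of the peeling that produced \eqref{B4}. At stage $j$ the order-$j$ normal-derivative functionals on $T$, together with their tangential companions, are exactly the function-value and tangential dof's of $w_j := w/\lambda^{j}$ restricted to $T$; these constitute the dof's of a $C^{2m-j}$-$P_{k-j}^{(3)}$ tetrahedral element, so Theorem~\ref{3u} forces $w_j|_T = 0$ and hence $w_j = \lambda\, w_{j+1}$. Starting at $j=0$ and iterating up to $j=m$ strips off $m+1$ factors of $\lambda$, giving
\a{ v_1 - v_2 = \lambda^{m+1} q }
for some $q \in P^{(4)}_{k-m-1}(\Omega)$. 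Consequently $w$ and all of its normal derivatives up to order $m$ vanish on $T$, so $v|_{S_1}$ and $v|_{S_2}$ agree together with those derivatives across every face-tetrahedron; that is, $v$ is $C^m$ there. Continuity across the lower-dimensional shared faces---face-triangles, edges and vertices---then follows as a direct corollary, exactly as in Theorem~\ref{3c}.

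The step I expect to be the main obstacle is justifying that the $T$-attached dof's really pin down $w|_T$ and each successive profile $w_j|_T$, i.e.\ that the restriction of \eqref{4-v}--\eqref{4-t} to $T$ is genuinely the unisolvent dof set of the stated $C^{2m-j}$-$P_{k-j}^{(3)}$ elements. This is precisely where the delicate bookkeeping of \eqref{4-t}---the normal derivatives of each order together with the subtractions ``near corners'' and ``near edges'' that delete the overlapping indices---must reproduce exactly the trace data of the tetrahedral element, with no index counted twice and none omitted. Fortunately the uni-solvency theorem has already validated this count on a single simplex through the factorizations culminating in \eqref{B4}; the present proof needs only the observation that the identical one-face peeling applies verbatim to the difference $w$, so no computation beyond invoking that structure is required.
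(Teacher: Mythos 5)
Your proposal is correct and takes essentially the same route as the paper: the paper's proof simply applies the factorization \eqref{B4} to the difference $w=v_1-v_2$ across a common face-tetrahedron, exactly as in Theorem \ref{3c}, to get $w=\lambda^{m+1}q$ with $q\in P^{(4)}_{k-m-1}(\Omega)$, whence $v$ and its normal derivatives up to order $m$ agree on the shared facet and continuity on face-triangles, edges and vertices follows as a corollary. Your extra care in identifying which facet-attached functionals are shared, and your observation that the stagewise trace/unisolvency bookkeeping need not be redone because it was already validated in the peeling that produced \eqref{B4}, only makes explicit what the paper leaves implicit.
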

\begin{proof} Using \eqref{B4} instead of \eqref{B}, 
   the proof is identical to that of Theorem \ref{3c}.
\end{proof}

\section{A code computing degrees of freedom of $C^m$-$P_k^{(n)}$ } 

A computer code in Fortran is listed in Section \ref{code}.

\begin{enumerate}
 \item The code uses $n=3$, $m=3$ and $k_1=2$ to compute the nodal basis of the
    $C^m$-$P_{2^n m +1 +k_1}^{(n)}$ finite element.  One can change \verb+c0+ for
    computing other cases.
  \item The output in Section \ref{code} lists the barycentric indices of the
     first function value, the first first derivative, the first second derivative and the 
    first third derivative at a face triangle in top four lines, respectively.
     This is for studying the index.  It can be commented out by \verb+c6+.
  \item Also by changing \verb+c6+ (and the \verb+if+ statement above it) we can 
     output other index or all $\dim P_k^{(n)}$ indices.
  \item The output in Section \ref{code} lists the degrees of freedom at each one sub-simplex
      (0=vertex, 1=edge, and so on.)  At the end of each level of simplex,  the number of 
    sub-simplex and the subtotal of degrees of freedom are listed.
    At the end,  the dimension of $P_k^{(n)}$ and the number of degrees of freedom are listed.
    They match each other.
  \item By changing \verb+c1+, we can list each index as soon as it is assigned in to an index set.
  \item By changing \verb+c2+, we can list an index on high-dimension sub-simplex if it is assigned to
      a low-dimensional sub-simplex.   This would help us to find overlapping structure.
       This change requires corresponding change on \verb+c7+.
  
  \item By changing \verb+c3+, \verb+c4+ and \verb+c5+, we can output all 
         indices of one particular group. 
   
\end{enumerate}

\appendix
\section{The code computing dof of $C^m$-$P_k{(n)}$ and its output}\label{code}

A Fortran computer code computes the index set of nodal basis functions of $C^m$-$P_k{(n)}$
   finite elements, for any space dimension $n$.

\baselineskip 12pt
\begin{verbatim}

c0    nDCmPk(n,m,k1) for C^m-P_{2^n m+1+k1}^{(n)} element
      call nDCmPk(3,3,2)  
      end  

      subroutine nDCmPk(n,m,k1) 
      integer ix(4000000,10),ii(12,120,30,10),iz(8,120,10) 
      k=m*2**n+1+k1
      idim=1
      do i=1,n
       idim=idim*(k+i)/i
      enddo  
      if(idim.gt.4000000) stop 'inc ix dim'
      call izindex(n,iz)  
      call baryc(n,k,idim,ix) 
       
      do i=2,7
       iz(i,1,1)=0
      enddo 
      do i0=0,n 
       is=0
       do j=0,i0 
         is=max(is,iz(8,1,i0+1))
       enddo
       if(is.gt.30) stop 'inc ii in nDCmPk' 
       m1=m*2*(n-1-i0)+1
       if(i0.eq.n) m1=1  
       do j2=1,is
        do j1=1,m1 
         ii(1,j1,j2,1+i0)=0 
        enddo 
       enddo 
      enddo  
 23   format(' (',3(i2,','),i2')',6i3) 
 33   format('simplex', i2,' derivative',i2,' dof ',i7,'  sum=',i8) 
 43   format('level  ', i2,' #simplex  ',i2,' dofs',i7,' total',i8) 
 53   format('(n m k_1)=',3i2,',dim P_{',i3,'}=',i8,'C^m-P_k^n=',i8) 

      do il=0,n 
       call subs(n,m,k,idim,il,ix,ii,iz)  
      enddo  
      itl=0
      do i0=0,n 
       ic=0
       do i1=0,m*2**(n-i0-1) 
        is=0
        do i2=1,1 
         do i=1,idim   
         if(((ix(i,n+2).eq.i0+1).and.(ix(i,n+3).eq.i1+1))
     >      .and. (ix(i,n+4).eq.i2) ) then 
          is=is+1
c1         write(6,23) (ix(i,j),j=1,n+1),ic
         endif
         enddo
        enddo
        ic=ic+is
        write(6,33) i0,i1, is,ic 
c2      if(i0.eq.3) print*, ' overlap ', (iz(j,i1,10),j=1,i0)
       enddo
       write(6,43) i0,iz(8,1,i0+1), ic,ic*iz(8,1,i0+1)
       itl=itl+ic*iz(8,1,i0+1)
      enddo
      write(6,53) n,m,k1,k, idim, itl 
      end
    
      subroutine indexing(n,k,ix,i1,i2,i3,i4,i5,i6,j,i)
      integer ix(4000000,10),id(10)
      j=j+1
      if(i.eq.j) then
       id(1)=i1
       id(2)=i2
       id(3)=i3
       id(4)=i4
       id(5)=i5
       id(6)=i6
       ix(i,1+n)=0
       do l=1,n
        ix(i,l)=id(l)
        ix(i,1+n)=ix(i,1+n)+id(l)
       enddo
       ix(i,1+n)=k-ix(i,1+n)
       ix(i,2+n)=0
      endif
      end   
      
      subroutine baryc(n,k,idim,ix,iz) 
      integer ix(4000000,10) 
      do i=1,idim 
       j=0
       do i1=0,k
        do i2=0,k-i1
        if(n.eq.2)then
         call indexing(n,k,ix,i1,i2,i3,i4,i5,i6,j,i)
        else 
         do i3=0,k-i1-i2
         if(n.eq.3)then
          call indexing(n,k,ix,i1,i2,i3,i4,i5,i6,j,i)
         else 
          do i4=0,k-i1-i2-i3
          if(n.eq.4)then
           call indexing(n,k,ix,i1,i2,i3,i4,i5,i6,j,i)
          else 
           do i5=0,k-i1-i2-i3-i4
           if(n.eq.5)then
            call indexing(n,k,ix,i1,i2,i3,i4,i5,i6,j,i)
           else 
            do i6=0,k-i1-i2-i3-i4-i5
            call indexing(n,k,ix,i1,i2,i3,i4,i5,i6,j,i)
            enddo
           endif
           enddo
          endif
          enddo
         endif
         enddo
        endif
        enddo 
       enddo 
      enddo  
      end 
 
      subroutine dof(n,m,k,il,idim,ix,ii,iz,i1,i2,i3,i4,i5,i6)
      integer ix(4000000,10),ii(12,120,30,10),iz(8,120,10) 
      integer id(10)  
      id(1)=i1+1
      id(2)=i2+1
      id(3)=i3+1
      id(4)=i4+1
      id(5)=i5+1
      id(6)=i6+1 
      do kd=0,m*2**(n-il-1) 
       do i=1,idim 
c7      if((ix(i,n+2).eq.0).or. (il.eq.3) )
        if (ix(i,n+2).eq.0)  
     >     call ixy(n,m,k,il,kd,ix,ii,iz,i,id)
       enddo
      enddo
      end   
 
      subroutine subs(n,m,k,idim,il,ix,ii,iz)  
      integer ix(4000000,10),ii(12,120,30,10),iz(8,120,10)   
      do i1=0,n
      if(il.eq.0)then
       call dof(n,m,k,il,idim,ix,ii,iz,i1,i2,i3,i4,i5,i6)
       else
       do i2=i1+1,n
       if(il.eq.1)then
        call dof(n,m,k,il,idim,ix,ii,iz,i1,i2,i3,i4,i5,i6)
        else 
        do i3=i2+1,n
        if(il.eq.2)then
         call dof(n,m,k,il,idim,ix,ii,iz,i1,i2,i3,i4,i5,i6)
         else 
         do i4=i3+1,n
         if(il.eq.3)then
         call dof(n,m,k,il,idim,ix,ii,iz,i1,i2,i3,i4,i5,i6)
         else 
          do i5=i4+1,n
          if(il.eq.4)then
          call dof(n,m,k,il,idim,ix,ii,iz,i1,i2,i3,i4,i5,i6)
          else 
           do i6=i5+1,n 
           call dof(n,m,k,il,idim,ix,ii,iz,i1,i2,i3,i4,i5,i6) 
           enddo
          endif
          enddo
         endif
         enddo
        endif
        enddo 
       endif
       enddo  
      endif
      enddo 
      end 

      subroutine izs(j,j0,iz,i1,i2,i3,i4,i5,i6)  
      integer iz(8,120,10) 
      j=j+1 
      if(j.gt.120) stop ' inc j in izs '
      iz(1,j,j0)=i1+1
      iz(2,j,j0)=i2+1
      iz(3,j,j0)=i3+1
      iz(4,j,j0)=i4+1
      iz(5,j,j0)=i5+1
      iz(6,j,j0)=i6+1
      end 
      
      subroutine izindex(n,iz)  
      integer iz(8,120,10)   
      do j0=1,6
       j=0
       do i1=0,n 
        if(j0.eq.1) then
         call izs(j,j0,iz,i1,i2,i3,i4,i5,i6)
        else 
         do i2=i1+1,n 
          if(j0.eq.2) then
           call izs(j,j0,iz,i1,i2,i3,i4,i5,i6)
          else 
           do i3=i2+1,n 
            if(j0.eq.3) then
             call izs(j,j0,iz,i1,i2,i3,i4,i5,i6)
            else 
             do i4=i3+1,n 
              if(j0.eq.4) then
               call izs(j,j0,iz,i1,i2,i3,i4,i5,i6)
              else 
               do i5=i4+1,n 
                if(j0.eq.5) then
                 call izs(j,j0,iz,i1,i2,i3,i4,i5,i6)
                else 
                 do i6=i5+1,n 
                  call izs(j,j0,iz,i1,i2,i3,i4,i5,i6)
                 enddo
                endif
               enddo
              endif
             enddo
            endif
           enddo
          endif
         enddo
        endif
       enddo 
       iz(8,1,j0)=j  
      enddo   
      end
  
      subroutine ixy(n,m,k,il,kd,ix,ii,iz,i,id)
      integer ix(4000000,10),ii(12,120,30,10),iz(8,120,10) 
      integer id(10)  
      is=0
      do l=1,il+1
        is=is+ ix(i,id(l))
      enddo 
      if(is.eq.k-kd) then 
       do j=1,iz(8,1,il+1) 
        is=0
        do l=1,il+1
         is=is+abs(iz(l,j,il+1)-id(l))
        enddo
        if(is.eq.0) goto 2
       enddo
       print*, ' not found ', il, (id(l),l=1,il+1)
       stop ' n-found'
 2     continue   
      if(ix(i,n+2).eq.3) then
       if(((il.eq.3).and.(kd.eq.3)).and.(j.eq.1)) then
c3         iz(ix(i,n+2),kd+1,10)=iz(ix(i,n+2), kd+1,10)+1
c4        write(6,4) i,(ix(i,l),l=1,n+4), il+1,1+kd,j, 
c5    >        iz(ix(i,n+2), kd+1,10)
 4       format(i8, 5i3, ' used', 3i4, ' try', 3i3, ' #', i5)
       endif
      else
       ii(1,j,1+kd,il+1)=ii(1,j,1+kd,il+1)+1
       ix(i,n+2)=il+1
       ix(i,n+3)=1+kd
       ix(i,n+4)=j
       ix(i,n+5)=ii(1,j,1+kd,il+1) 
       if(((il.eq.2).and.(j.eq.1)).and.(kd.ge.0))then
         iz(kd+2,1,1)=iz(kd+2,1,1)+1
         if(iz(kd+2,1,1).lt.2) then
c6       write(6,13) iz(kd+1,1,1), ii(1,j,1+kd,il+1), j, kd,il 
         write(6,12) (ix(i,l),l=1,n+4), (iz(l,j,il+1),l=1,il+1)
         endif
       endif
13     format(2i8,'   ',5i4, '    ',10i4)
12     format('check: ',5i4, '    ',10i4)
      endif
      endif
      end   
\end{verbatim}

The output for $C^3$-$P_{3(4)+1+2}^{(3)}$:

\begin{verbatim}
check:    7   7  13   0   3       1   1   1   2   3
check:    6   6  14   1   3       2   1   1   2   3
check:    5   6  14   2   3       3   1   1   2   3
check:    4   6  14   3   3       4   1   1   2   3
simplex 0  derivative 0 dof       1  sum=       1
simplex 0  derivative 1 dof       3  sum=       4
simplex 0  derivative 2 dof       6  sum=      10
simplex 0  derivative 3 dof      10  sum=      20
simplex 0  derivative 4 dof      15  sum=      35
simplex 0  derivative 5 dof      21  sum=      56
simplex 0  derivative 6 dof      28  sum=      84
simplex 0  derivative 7 dof      36  sum=     120
simplex 0  derivative 8 dof      45  sum=     165
simplex 0  derivative 9 dof      55  sum=     220
simplex 0  derivative10 dof      66  sum=     286
simplex 0  derivative11 dof      78  sum=     364
simplex 0  derivative12 dof      91  sum=     455
level   0  #simplex   4 dofs    455 total    1820
simplex 1  derivative 0 dof       2  sum=       2
simplex 1  derivative 1 dof       6  sum=       8
simplex 1  derivative 2 dof      12  sum=      20
simplex 1  derivative 3 dof      20  sum=      40
simplex 1  derivative 4 dof      30  sum=      70
simplex 1  derivative 5 dof      42  sum=     112
simplex 1  derivative 6 dof      56  sum=     168
level   1  #simplex   6 dofs    168 total    1008
simplex 2  derivative 0 dof      28  sum=      28
simplex 2  derivative 1 dof      45  sum=      73
simplex 2  derivative 2 dof      63  sum=     136
simplex 2  derivative 3 dof      82  sum=     218
level   2  #simplex   4 dofs    218 total     872
simplex 3  derivative 0 dof     360  sum=     360
level   3  #simplex   1 dofs    360 total     360
 (n m k_1)= 3 3 2, dim P_{ 27}=    4060 C^m-P_k^n=    4060
\end{verbatim}

\end{document}